\newcommand{\fr}{\mathcal{F}}
\newcommand{\C}{\mathbb{C}}
\numberwithin{equation}{section}
\newtheorem{theorem}{Theorem}[section]
\newtheorem{lemma}[theorem]{Lemma}
\newtheorem*{thmA}{Theorem A}
\newtheorem*{thmC}{Theorem C}
\newtheorem*{thmD}{Theorem D}
\newtheorem*{thmB}{Theorem B}
\theoremstyle{remark}
\begin{document}

\title[Normality Criteria for Families of Meromorphic Functions about Shared Functions ]{Normality Criteria for Families of Meromorphic Functions about Shared Functions}

\author[S. Kumar	]{Sanjay Kumar}
\address{Department of Mathematics, Deen Dayal Upadhyaya College, University of Delhi,
Delhi--110 078, India }
\email{skpant@ddu.du.ac.in}
\author[P. Rani]{Poonam Rani}
\address{Department of Mathematics, University of Delhi,
  Delhi--110 007, India} \email{pnmrani753@gmail.com}

\begin{abstract}
In this paper we prove some normality criteria for a family of meromorphic functions concerning shared analytic functions, which extend or generalized some result obtained by Y. F. Wang,  M. L. Fang~\cite{WF} and  J. Qui, T. Zhu ~\cite{QZ}.
\end{abstract}

\keywords{meromorphic functions, holomorphic functions,  normal families, Zalcman's lemma}

\subjclass[2010]{30D45, 30D35}
\maketitle
\section{Introduction and main results}
Let $D$ be a domain in $\mathbb C$, and $\mathcal  F$ be a family of meromorphic function in a domain $D$.  $ \mathcal F$ is said to be normal in a domain $D$, in the sense of Montel, if for each sequence $\{f_j\} \in \mathcal F$  there exist a subsequence $\{f_{j_k}\}$, such that  $\{f_{j_k}\}$ converges spherically locally uniformly on $D$, to a meromorphic function or $\infty$ \cite{Ahl, Hay, Schiff, Yang}.\\

Wilhelm Schwick ~\cite{Sch} was the first who gave a connection between normality and shared values and  proved a theorem   which says that:  {\it{A family $\fr$ of meromorphic functions on a domain ${D}$ is normal, if $f$ and $f'$ share $a_1$, $a_2$, $a_3$ for every $f\in \mathcal F$, where $a_1$, $a_2$, $a_3$ are distinct complex numbers.}}

Let us recall the definition of shared value. Let $f$ be a meromorphic function of a domain $D\subset\C$. For $p\in \C$, let
\begin{equation*}
  E_f(p)=\{z\in D: f(z)=p\}
\end{equation*}
and let
\begin{equation*}
E_f(\infty)= \text{poles of\ } f \text{\ in\ } D.
\end{equation*}
 For $p\in \C\cup \{\infty\}$, two meromorphic functions $f$ and $g$ of $D$ share the value $p$ if $E_f(p)=E_g(p).$\\

In 1998, Wang and Fang ~\cite{WF} obtained the following result.
\begin{thmA}\label{theorem1}
Let $\mathcal F$ be a family of meromorphic functions in a domain $D.$  Let $k$ be a positive integer and $b$ be a non zero finite complex number. If for each $f \in \mathcal F $, all zeros of $f$ have multiplicity at least $k+2,$ and $f^{(k)}(z) \neq  b$ on $D,$ then $\mathcal F$ is normal on $D.$\\
\end{thmA}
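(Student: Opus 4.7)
The plan is to argue by contradiction using the Pang--Zalcman rescaling lemma. Suppose $\mathcal{F}$ fails to be normal at some point $z_0 \in D$. Since every $f \in \mathcal{F}$ has zeros only of multiplicity $\geq k+2 \geq k$, the scaling exponent $\alpha = k$ is admissible, so there exist $f_n \in \mathcal{F}$, $z_n \to z_0$, and $\rho_n \to 0^{+}$ such that
\[
g_n(\zeta) \;:=\; \rho_n^{-k}\, f_n(z_n + \rho_n \zeta)
\]
converges spherically, locally uniformly on $\C$, to a nonconstant meromorphic function $g$ of finite order.

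Next I would transfer the two hypotheses on $\mathcal{F}$ to the limit function $g$. A direct computation gives $g_n^{(k)}(\zeta) = f_n^{(k)}(z_n + \rho_n \zeta)$, which by hypothesis never equals $b$; Hurwitz's theorem applied to $g_n^{(k)} - b$ then leaves only the two alternatives $g^{(k)} \equiv b$ on $\C$ or $g^{(k)}(\zeta) \neq b$ everywhere on $\C$. In the same way, Hurwitz applied to $g_n$ at any prospective zero $\zeta_0$ of $g$ forces each zero of $g$ to have multiplicity $\geq k+2$, since otherwise nearby $g_n$ would exhibit zeros of smaller multiplicity, contradicting the assumption on $\mathcal{F}$.

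Now I would rule out both alternatives. If $g^{(k)} \equiv b$, then $g$ is a polynomial of degree exactly $k$; but by the fundamental theorem of algebra $g$ has a zero of multiplicity at most $k < k+2$, contradicting the multiplicity property just derived. So we are left with a nonconstant, finite-order meromorphic function $g$ on $\C$ whose zeros all have multiplicity $\geq k+2$ and whose $k$th derivative omits the nonzero value $b$. Showing that no such $g$ exists is the main obstacle, and I would invoke a Hayman--Wang--Fang type lemma to that effect: its proof proceeds via the Second Fundamental Theorem applied to $g^{(k)}$ with targets $0,\, b,\, \infty$, together with the Logarithmic Derivative Lemma. The omission of $b$ eliminates $\overline{N}\bigl(r,\,1/(g^{(k)}-b)\bigr)$; the multiplicity hypothesis gives $\overline{N}(r,1/g^{(k)})$ coming from zeros of $g$ a bound of the shape $\tfrac{1}{k+2}\,N(r,1/g)$; and the pole contribution is controlled by $\overline{N}(r,g)$. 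Combining these estimates forces $T(r,g) = S(r,g)$, whence $g$ is constant, contradicting the nonconstancy produced by the Zalcman rescaling and completing the proof.
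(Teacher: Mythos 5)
The paper does not actually prove Theorem A; it quotes it from Wang--Fang \cite{WF}, so the only in-paper comparison is with the proof of Theorem \ref{thm 3}, which follows exactly the architecture you propose: Pang--Zalcman rescaling with exponent $k$, transfer of the multiplicity and omitted-value hypotheses to the limit $g$ via Hurwitz, and then a value-distribution lemma asserting that no such $g$ exists (Lemmas \ref{lemma 2} and \ref{lemma 3}). Your setup is sound, including the choice $\alpha=k$; note only that the admissibility of $\alpha=k$ does not come from the zeros having multiplicity $\geq k$ alone, but from the Pang--Zalcman refinement requiring $|f^{(k)}|\leq A$ at the zeros of $f$ (automatic here, since zeros of multiplicity $\geq k+2$ force $f^{(k)}=0$ there); the version recorded as Lemma \ref{lemma 1} would cap $\alpha$ strictly below the pole multiplicity, and Theorem A imposes no hypothesis on poles.

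The genuine gap is in the final step, which is the entire content of the theorem. First, your Second Fundamental Theorem sketch bounds $\overline{N}\bigl(r,1/g^{(k)}\bigr)$ only by the contribution of zeros of $g^{(k)}$ lying over zeros of $g$; but $g^{(k)}$ in general has many zeros away from the zeros of $g$, and controlling those (via estimates such as $N(r,1/g^{(k)})\leq N(r,1/g)+k\overline{N}(r,g)+S(r,g)$ together with a comparison of $T(r,g^{(k)})$ and $T(r,g)$) is precisely the hard part of the Wang--Fang lemma; as written the bookkeeping does not close. Second, the conclusion ``$T(r,g)=S(r,g)$, hence $g$ is constant'' has force only when $g$ is transcendental; the Zalcman limit is frequently rational, and for rational $g$ the error terms in the Second Fundamental Theorem are of the same order as $T(r,g)$. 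You dispose of the polynomial case, but a nonpolynomial rational $g$ with all zeros of multiplicity $\geq k+2$ and $g^{(k)}\neq b$ on $\C$ must be excluded by an explicit count of the degrees of numerator and denominator, exactly as in the paper's Lemma \ref{lemma 3}. A clean repair: for transcendental $g$ invoke Lemma \ref{lemma 2} with $p\equiv b$ (its hypothesis, zero multiplicity $\geq k+1$, is met, and the Zalcman limit has finite order); for rational $g$ supply the elementary degree computation.
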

Wang and Fang ~\cite{WF} gives an example to show that Theorem \ref{theorem1}, is not valid if all zeros of $f$ have multiplicity less than $k+2.$\\

By the ideas of shared values, M. Fang and L. Zalcman ~\cite{FZ, GPF} proved
\begin{thmB}
Suppose that $k$ is a positive integer and $b \neq 0$ be a finite complex number. Let $\mathcal F$ be a family of meromorphic function in a domain $D$, all of zeros of $f \in\mathcal F$ are of multiplicity at least $k+2.$ If for each $f, g \in \mathcal F, f$ and $g$ share $0, f^{(k)}$ and $g^{(k)}$ share $b$ IM in $D$, then $\mathcal F$ is normal in $D.$ 
\end{thmB}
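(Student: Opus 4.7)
The plan is to run the standard Zalcman--Pang rescaling, then use Hurwitz and the shared-value hypothesis to pull a $b$-point of the limit's $k$-th derivative back into $D$ and apply the identity theorem. Suppose, for contradiction, that $\mathcal F$ fails to be normal at some $z_0\in D$. Since every zero of every $f\in\mathcal F$ has multiplicity at least $k+2>k$, the Pang--Zalcman rescaling lemma, applied with exponent $\alpha=k$, produces $\{f_n\}\subset\mathcal F$, points $z_n\to z_0$ in $D$, and scales $\rho_n\to 0^+$ such that
\begin{equation*}
g_n(\zeta):=\rho_n^{-k}f_n(z_n+\rho_n\zeta)\longrightarrow g(\zeta)
\end{equation*}
spherically locally uniformly on $\mathbb{C}$, with $g$ a non-constant meromorphic function of finite order, all of whose zeros have multiplicity at least $k+2$. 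Differentiating gives $g_n^{(k)}(\zeta)=f_n^{(k)}(z_n+\rho_n\zeta)\to g^{(k)}(\zeta)$ locally uniformly.

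Next I analyse $g^{(k)}-b$. It is not identically zero: otherwise $g$ would be a polynomial of degree $k$, and the fundamental theorem of algebra would give it a zero of multiplicity at most $k$, contradicting the multiplicity hypothesis on $g$. On the other hand, the Hayman-type value-distribution result that underlies Theorem A ensures that a non-constant meromorphic function on $\mathbb{C}$ with all zeros of multiplicity at least $k+2$ must satisfy $g^{(k)}(\zeta_0)=b$ for some $\zeta_0\in\mathbb{C}$ (in fact for infinitely many $\zeta_0$ when $g$ is transcendental). By Hurwitz's theorem, for each large $n$ one can choose $\tilde\zeta_n$ near $\zeta_0$ with $g_n^{(k)}(\tilde\zeta_n)=b$, and arrange $\tilde\zeta_n\to\zeta_0$. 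Setting $w_n:=z_n+\rho_n\tilde\zeta_n$, the $w_n$ lie in $D$ for large $n$, tend to $z_0$, and satisfy $f_n^{(k)}(w_n)=b$.

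The shared-value hypothesis now furnishes the contradiction. For any fixed $h\in\mathcal F$, since $f_n^{(k)}$ and $h^{(k)}$ share $b$ IM, we have $h^{(k)}(w_n)=b$ for every large $n$. By arranging the $w_n$ to be pairwise distinct---either by exploiting a $b$-point of $g^{(k)}$ of multiplicity at least two, or by running the Hurwitz step at two distinct $b$-points $\zeta_0\ne\zeta_0'$ and noting that $|w_n-w_n'|/\rho_n\to|\zeta_0-\zeta_0'|\ne 0$ precludes simultaneous collapse to a single point---one obtains an infinite set of distinct points in $D$ accumulating at $z_0$ on which the meromorphic function $h^{(k)}-b$ vanishes. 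The identity theorem then forces $h^{(k)}\equiv b$ on the component of $D$ through $z_0$; as in the opening observation, this makes $h$ a polynomial of degree $k$, contradicting the multiplicity hypothesis on $h$. I expect the main technical obstacle to be precisely this distinctness step: in the degenerate case where $g^{(k)}-b$ has only a single, simple zero, Hurwitz produces a uniquely determined $\tilde\zeta_n$, so a supplementary argument---via Nevanlinna's second main theorem in the transcendental case, or direct inspection in the rational case---is required to exhibit a second $b$-point and execute the scale comparison.
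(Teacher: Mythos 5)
First, a point of order: the paper does not prove Theorem~B at all --- it is quoted from Fang and Zalcman \cite{FZ} as background --- so your attempt can only be measured against the architecture of the paper's proof of its own Theorem~1.1, which your plan does follow (Zalcman--Pang rescaling normalized by $\rho_n^{-k}$, a value-distribution lemma producing $b$-points of $g^{(k)}$, Hurwitz's theorem, the sharing hypothesis, and an isolated-zeros argument). Within that architecture there are two genuine gaps. The first is the one you flag yourself but do not close: the whole argument hinges on $g^{(k)}-b$ having at least \emph{two distinct} zeros, and this is exactly where the real work of the theorem lives. Lemma~2.2 of the paper settles the transcendental case, but for a rational limit function the paper's Lemma~2.3 requires the additional hypothesis that all zeros of $f^{(k)}$ have multiplicity at least $2d+2$ --- a hypothesis Theorem~B does not supply --- so you cannot borrow the paper's lemmas, and ``direct inspection in the rational case'' is not a proof. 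This degenerate case is also almost certainly where the hypothesis that the members of $\mathcal F$ share the value $0$ must enter; your argument never uses that hypothesis, which is a strong indication that the case is not actually handled.

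The second gap is in the endgame. From $h^{(k)}(w_n)=b$ at points accumulating at $z_0$ you conclude $h^{(k)}\equiv b$, hence that $h$ is a polynomial of degree $k$, ``contradicting the multiplicity hypothesis.'' But the multiplicity hypothesis only constrains zeros of $h$ \emph{inside} $D$; a degree-$k$ polynomial may have all of its zeros outside $D$, so no contradiction follows. (The analogous step for $g$ works only because $g$ is defined on all of $\C$, where the fundamental theorem of algebra guarantees a zero.) The repair is short and worth recording: if $h^{(k)}\equiv b$, then by the sharing hypothesis every point of the relevant component of $D$ is a $b$-point of every $f^{(k)}$, so $f_n^{(k)}\equiv b$ for all $n$, whence $g_n^{(k)}\equiv b$ and $g^{(k)}\equiv b$, contradicting the non-degeneracy $g^{(k)}\not\equiv b$ that you correctly established at the outset. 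With that repair, and with an actual proof of the two-distinct-zeros statement for rational limit functions (using the shared value $0$ where needed), the plan would go through.
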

In 2009, Y. Li and Y. Gu ~\cite{LG} proved the following result.
\begin{thmC}
Let $\mathcal F$ be a family of meromorphic functions defined in a domain $D.$ Let $k, n \geq k+2$ be positive integers and $b \neq 0$ be a finite complex number. If for each pair of functions $f, g \in \mathcal F,$ $(f^{n})^{(k)}$ and $ (g^{n})^{(k)}$ share $b$ in $D$, then $\mathcal F$ is normal in $D.$
\end{thmC}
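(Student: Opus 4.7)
My strategy is to argue by contradiction, using Pang's rescaling lemma to produce a non-constant meromorphic limit on $\C$ with strong structural constraints, which are then ruled out by a Liouville/Hayman-type argument. Suppose $\fr$ is not normal at some $z_0 \in D$. Applying Pang's sharpening of Zalcman's lemma with exponent $\al = k/n \in (0,1)$, I would extract sequences $f_j \in \fr$, $z_j \to z_0$, and $\rho_j \to 0^+$ such that
\begin{equation*}
g_j(\zeta) := \rho_j^{-k/n} f_j(z_j + \rho_j \zeta) \longrightarrow g(\zeta)
\end{equation*}
spherically locally uniformly on $\C$, with $g$ a non-constant meromorphic function of finite order. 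The exponent $\al = k/n$ is chosen precisely so that $(g_j^n)^{(k)}(\zeta) = (f_j^n)^{(k)}(z_j + \rho_j \zeta)$; this guarantees $(g_j^n)^{(k)} \to (g^n)^{(k)}$ locally uniformly off the poles of $g$.

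Next I would translate the shared-value assumption into a statement about $g$. Fix any $f_0 \in \fr$. The zeros of $(f_0^n)^{(k)} - b$ are isolated near $z_0$, so one may shrink to a punctured neighborhood on which $(f_0^n)^{(k)} \neq b$; the sharing condition then gives $(f_j^n)^{(k)} \neq b$ on that neighborhood for every $j$. After rescaling, $(g_j^n)^{(k)} - b$ has no zeros on any fixed compact subset of $\C$ for large $j$, so Hurwitz's theorem yields the dichotomy: either $(g^n)^{(k)} \equiv b$ on $\C$, or $(g^n)^{(k)}(\zeta) \neq b$ for every $\zeta \in \C$.

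Excluding both alternatives is the crux of the argument, and it is here that the hypothesis $n \geq k+2$ is essential; I expect this step to be the main obstacle. In the identity case $(g^n)^{(k)} \equiv b$, the conclusion forces $g^n$ to be a polynomial of degree exactly $k$ (in particular $g$ has no poles), so $g$ is entire with $n \cdot \deg(g) = k$, which is incompatible with $n \geq k+2$ and $\deg(g) \geq 1$. In the omitted-value case, $h := g^n$ is a non-constant meromorphic function on $\C$ whose zeros all have multiplicity at least $n \geq k+2$ and whose $k$-th derivative omits the nonzero value $b$. This configuration is the Liouville-type counterpart of Theorem A: for transcendental $h$ it is excluded by a Hayman-style estimate built from the First Fundamental Theorem and the Logarithmic Derivative Lemma, while the rational case is dispatched by a direct zero--pole count using the multiplicity bound. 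The resulting contradiction completes the proof.
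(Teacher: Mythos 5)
First, a point of reference: the paper does not actually prove this statement --- Theorem C is quoted from Li and Gu \cite{LG} as background --- so there is no in-paper proof to compare against; the closest analogue is the method the authors use for Theorem \ref{thm 3} and Lemma \ref{lemma 3}. Your overall architecture is the standard and correct one for results of this type: rescale with exponent $-k/n$ (admissible in the Pang--Zalcman lemma since $n\ge k+2$ gives $k/n<1$; note the exponent is $-k/n$, not $+k/n$), so that $(g_j^n)^{(k)}(\zeta)=(f_j^n)^{(k)}(z_j+\rho_j\zeta)$, and then transfer the sharing hypothesis to the limit function. The identity case $(g^n)^{(k)}\equiv b$ is also disposed of correctly, since it would force $n\cdot\deg g=k$ with $g$ a non-constant polynomial.

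The genuine gap is the claim that $(g_j^n)^{(k)}-b$ has \emph{no} zeros on a fixed compact subset of $\C$ for large $j$. The sharing hypothesis gives a single common set $E=\{z:(f^n)^{(k)}(z)=b\}$, the same for every $f\in\fr$, and $E$ is discrete; but you can only remove a \emph{punctured} neighborhood of $z_0$ from $E$ --- the point $z_0$ itself may lie in $E$, i.e.\ every member of the family may satisfy $(f^n)^{(k)}(z_0)=b$. The rescaled disks $z_j+\rho_j K$ in general contain $z_0$, namely at $\zeta_j=(z_0-z_j)/\rho_j$, so $(g_j^n)^{(k)}-b$ can have exactly one zero in $K$. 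Hurwitz then yields only that $(g^n)^{(k)}-b$ has \emph{at most one} distinct zero, not that it omits $b$, so your two alternatives do not exhaust the possibilities. The omitted-value machinery (Hayman-type estimate, Liouville argument) is therefore not strong enough: one needs the quantitatively stronger statement that $(g^n)^{(k)}-b$ must have at least \emph{two distinct} zeros. For transcendental $g$ this does follow from the Pang--Yang--Zalcman result (Lemma \ref{lemma 2}: the zeros of $g^n$ have multiplicity at least $n\ge k+1$, so $(g^n)^{(k)}-b$ has infinitely many zeros), but for rational $g$ it requires the careful zero--pole bookkeeping of the kind carried out in the paper's Lemma \ref{lemma 3} --- proving ``at least two distinct zeros,'' not merely ``cannot omit $b$'' --- and this is exactly where the hypothesis $n\ge k+2$ does its real work. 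As written, your plan waves at this step (``dispatched by a direct zero--pole count'') without confronting the one-zero case at all.
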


Recently, releasing the condition that poles of $f(z)$  are of multiplicity at least $k+2$, J. Qui and T. Zhu~\cite{QZ}  proved the following.\\

\begin{thmD}\label{thm2}
Let  $k \geq 2$ be an integer and let $b$ be a non zero finite complex number. Let $\mathcal F$ be a family of meromorphic functions defined in a domain $D,$ such that for each $f \in \mathcal F$,  all zeros of $f(z)$ have multiplicity at least $k+2,$ and all zeros of $f^{(k)}(z)$ are multiple. If for each $f, g \in \mathcal F$, $f$ and $g$ share $b$ in $D,$ then $\mathcal F$ is normal in $D.$
\end{thmD}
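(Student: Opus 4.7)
The plan is proof by contradiction using the Pang--Zalcman rescaling lemma. Assume $\mathcal{F}$ is not normal at some $z_0 \in D$. Because each $f \in \mathcal{F}$ has zeros only of multiplicity $\ge k+2 > k$, the rescaling exponent $\alpha = k$ is admissible and produces sequences $\{f_n\} \subset \mathcal{F}$, $z_n \to z_0$, and $\rho_n \to 0^+$ such that
\[
  g_n(\zeta) := \rho_n^{-k}\, f_n(z_n + \rho_n \zeta) \longrightarrow g(\zeta)
\]
spherically locally uniformly on $\mathbb{C}$, where $g$ is a non-constant meromorphic function with $g^{\#}(0) = 1$. This exponent is chosen so that $g_n^{(k)}(\zeta) = f_n^{(k)}(z_n + \rho_n \zeta) \to g^{(k)}(\zeta)$, letting the sharing condition on $b$ transfer directly to the limit.

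Two applications of Hurwitz's theorem immediately give: (i) every zero of $g$ has multiplicity at least $k+2$, and (ii) every zero of $g^{(k)}$ is multiple. From (i) one also infers that $g^{(k)} \not\equiv 0$ (else $g$ would be a polynomial of degree less than $k$, incompatible with (i)) and that $g^{(k)} \not\equiv b$ (else $g$ would be a polynomial of degree exactly $k$, whose zeros can account for at most $k$ multiplicities).

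The next step I would carry out is to use the shared-value condition to show that $g^{(k)}(\zeta) \ne b$ throughout $\mathbb{C}$. Suppose instead $g^{(k)}(\zeta_0) = b$. Hurwitz (combined with $g^{(k)} \not\equiv b$) produces $\zeta_n \to \zeta_0$ with $f_n^{(k)}(w_n) = b$, where $w_n := z_n + \rho_n \zeta_n \to z_0$. The sharing then forces $f^{(k)}(w_n) = b$ for \emph{every} $f \in \mathcal{F}$. Since $|f^{(k)}(w_n)| = |b|$ is bounded, $f$ must be holomorphic at $z_0$ with $f^{(k)}(z_0) = b$. Two subcases then arise. Either the $w_n$ are infinitely often distinct, in which case the $b$-points of $f^{(k)}$ accumulate at $z_0$ and the identity principle forces $f^{(k)} \equiv b$---making $f$ a polynomial of degree $k$ and contradicting hypothesis (i) for $f$; or $w_n$ is eventually equal to $z_0$, pinning down $\zeta_0 = \lim_n (z_0 - z_n)/\rho_n$, a value independent of the choice of $\zeta_0$. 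Thus $g^{(k)} - b$ has at most one zero in $\mathbb{C}$; a refinement exploiting the Hurwitz multiplicity count of this lone zero together with hypothesis (ii) rules out even this remaining possibility.

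The principal obstacle is then the non-existence of any non-constant meromorphic function $g$ on $\mathbb{C}$ satisfying (i), (ii), and $g^{(k)} \ne b$. This is the strengthening, appropriate to the present setting, of the auxiliary lemma underlying Theorem~A. I would establish it via Nevanlinna theory: apply the Second Fundamental Theorem to $g^{(k)}$ with target values $0$, $b$, $\infty$; invoke the Logarithmic Derivative Lemma to control $m(r, g^{(k)}/g)$; and deploy the sharp counting estimates $\bar N(r, 0; g) \le \frac{1}{k+2}\, T(r,g) + S(r,g)$ (from (i)) and $\bar N(r, 0; g^{(k)}) \le \tfrac{1}{2}\, N(r, 0; g^{(k)}) + S(r,g)$ (from (ii)) to obtain an inequality of the form $(1-\varepsilon)\, T(r, g) \le S(r, g)$ with $\varepsilon > 0$, which is impossible for a non-constant transcendental $g$. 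The rational case is closed by a direct degree computation combining (i) and (ii), completing the contradiction.
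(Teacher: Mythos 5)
Your proposal follows essentially the same route as the paper: Theorem D is quoted there from Qi--Zhu without proof, but the paper's argument for its generalization (Theorem 1.1, Case 2) is exactly yours --- Pang--Zalcman rescaling with $\rho_n^{-k}$, Hurwitz's theorem to transfer the multiplicity hypotheses to the limit $g$, the sharing condition to show $g^{(k)}-b$ has at most one zero, and a value-distribution lemma (the cited Pang--Yang--Zalcman result for transcendental $g$, a rational-function degree analysis for rational $g$) to produce at least two distinct zeros of $g^{(k)}-b$. My only suggestion is to drop the vague ``refinement'' by which you try to eliminate the single remaining $b$-point using hypothesis (ii) --- that hypothesis constrains the zeros of $g^{(k)}$, not its $b$-points, so it is not clear this step works --- and instead state the value-distribution lemma in the form ``$g^{(k)}-b$ has at least two distinct zeros'' (which is what your Nevanlinna and degree arguments actually deliver, and is how the paper's Lemmas 2.2 and 2.3 are used), so that the contradiction with ``at most one zero'' is immediate.
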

It is natural to ask whether Theorem D.  can be improved by the idea of sharing a holomorphic function. In this paper we study this problem and obtain the following result.
\begin{theorem}\label{thm 3}
Let $d \geq 0$ and $k \geq 2d+2$  be two integers and let $h \not \equiv 0$ be a holomorphic function in $D, $ and multiplicity of its all zeros is at most $d.$ Let $\mathcal F$ be a family of meromorphic functions in a domain $D$. If for each $f \in \mathcal F,$ the multiplicity of all zeros of $f $ is at least $k+2d+2,$ and multiplicity of all zeros of $f^{(k)}$ is at least $2d+2.$ If for each pair of functions $f, g \in \mathcal F, f$ and $g$ share $h$ in $D,$ then $\mathcal F$ is normal in $D.$ 
\end{theorem}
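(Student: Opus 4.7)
The plan is to argue by contradiction using the Pang--Zalcman rescaling lemma, splitting into two cases according to whether the shared function $h$ vanishes at the suspected point of non-normality. Suppose $\fr$ fails to be normal at some $z_0\in D$. Fix any $f_0\in\fr$ with $f_0\not\equiv h$ (if the only member of $\fr$ equals $h$ identically the family is trivially normal). The sharing hypothesis makes $E:=\{w\in D:f_0(w)=h(w)\}=\{w:f(w)=h(w)\}$ independent of the choice of $f\in\fr$, and $E$ is discrete since $f_0-h\not\equiv 0$.

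In the case $h(z_0)\neq 0$, I would apply Pang--Zalcman with exponent $\alpha=k$ (permissible because all zeros of $f\in\fr$ have multiplicity $\ge k+2d+2>k$) to extract $f_n\in\fr$, $z_n\to z_0$, $\rho_n\to 0^+$ such that $g_n(\zeta):=\rho_n^{-k}f_n(z_n+\rho_n\zeta)$ converges spherically locally uniformly on $\C$ to a non-constant meromorphic function $g$ of finite order. Hurwitz together with the multiplicity hypotheses forces (i) all zeros of $g$ to have multiplicity $\ge k+2d+2$, and (ii) all zeros of $g^{(k)}=\lim f_n^{(k)}(z_n+\rho_n\zeta)$ to have multiplicity $\ge 2d+2$. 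The translated sharing identity $\rho_n^k g_n(\zeta)=h(z_n+\rho_n\zeta)$ on $E_n:=(E-z_n)/\rho_n$ has its right-hand side blowing up (since $h(z_0)\neq 0$), so a Hurwitz count of preimages at each pole of $g$, combined with the discreteness of $E$, imposes stringent restrictions on the pole divisor of $g$. In the case $h(z_0)=0$, let $d^{*}\le d$ denote the exact vanishing order of $h$ at $z_0$; I would instead apply Pang--Zalcman with $\alpha=d^{*}$, so that $\rho_n^{-d^{*}}h(z_n+\rho_n\zeta)$ converges to a polynomial $P(\zeta)$ of degree $d^{*}$, obtaining a limit $g$ with the same multiplicity properties (i), (ii) and with the additional constraint that the zero set of $g-P$ is forced (again by the discreteness of $E$) to be sparse or consist only of multiple zeros; the buffer of $2d$ in the multiplicity lower bounds is tailored precisely to absorb $\deg P\le d$.

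The main obstacle is the plane-level rigidity lemma ruling out such a limit $g$. I would establish this via a Nevanlinna-theoretic defect inequality: Hayman's variant of the second fundamental theorem applied to $g$, bounding $T(r,g)$ by $\overline N(r,1/g)$ and $\overline N(r,1/(g^{(k)}-P^{(k)}))$ up to an $S(r,g)$ error, combined with the multiplicity estimates $\overline N(r,1/g)\le\tfrac{1}{k+2d+2}N(r,1/g)$ and $\overline N(r,1/g^{(k)})\le\tfrac{1}{2d+2}N(r,1/g^{(k)})$, the logarithmic derivative lemma, and the standard bound $T(r,g^{(k)})\le (k+1)T(r,g)+S(r,g)$, should collapse to $T(r,g)\le\varepsilon\, T(r,g)+S(r,g)$ with $\varepsilon<1$ precisely when $k\ge 2d+2$. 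This forces $g$ to be constant, contradicting the non-constancy guaranteed by Pang--Zalcman and completing the proof of normality of $\fr$.
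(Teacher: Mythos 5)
Your overall skeleton (contradiction, Pang--Zalcman rescaling, case split on whether $h(z_0)=0$) matches the paper, but the step you label ``the main obstacle'' is resolved in a way that cannot work. You propose to show, via a second-fundamental-theorem computation, that the Zalcman limit $g$ must be constant, contradicting its non-constancy. But non-constant limits with all the stated multiplicity properties certainly exist --- e.g.\ $g(\zeta)=\zeta^{k+2d+2}$ has all zeros of multiplicity $\geq k+2d+2$ and $g^{(k)}$ has a single zero of multiplicity $2d+2$ --- so no defect inequality can collapse to $T(r,g)\leq\varepsilon T(r,g)+S(r,g)$ using only those hypotheses. The sharing hypothesis must enter the endgame, and in the paper it does so through a completely different mechanism: Lemma 2 (Pang--Yang--Zalcman, transcendental case) and the paper's Lemma 3 (rational case, proved by an elementary degree/multiplicity count, not by Nevanlinna theory) show that $g^{(k)}-P$ has \emph{at least two distinct zeros}; Hurwitz then produces two separated sequences $\xi_n,\xi_n^*$ with $f_n^{(k)}(z_n+\rho_n\xi_n)=h(z_n+\rho_n\xi_n)$ and likewise for $\xi_n^*$; by sharing these points are solutions of $f_m^{(k)}=h$ for one \emph{fixed} $m$, and since that solution set is discrete and both sequences accumulate at $z_0$, both must eventually equal $z_0$ exactly --- contradicting $D(\xi_0,\delta)\cap D(\xi_0^*,\delta)=\emptyset$. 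You gesture at ``discreteness of $E$'' imposing ``stringent restrictions,'' but you never close this loop, and as written your plane-level lemma is false.

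Two secondary problems. First, in the case $h(z_0)=0$ your normalization is off: to compare $f_n^{(k)}$ with $h$ you would need $\alpha=k+d^{*}$, not $\alpha=d^{*}$, and even then $\rho_n^{-d^{*}}h(z_n+\rho_n\zeta)$ converges to a degree-$d^{*}$ polynomial only when $z_n/\rho_n$ stays bounded. The paper sidesteps this by first dividing out the zero of $h$ (applying Zalcman to $F_n=f_n/z^l$) and then treating the subcases $z_n/\rho_n\to\alpha$ finite and $z_n/\rho_n\to\infty$ separately, which is where the normal-family conclusion for $\mathcal F$ must afterwards be recovered from that of $\mathcal F_1$. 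Second, your Nevanlinna sketch would in any case not handle rational limit functions, for which one needs the separate counting argument of Lemma 3 (this is the bulk of the paper's technical work and the place where $k\geq 2d+2$ is actually used).
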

\section{Some Lemmas}
 In order to prove our results we need  the following lemmas. The well known Zalcman Lemma is a very important tool in the study of normal families. The following is a new version due to  Zalcman ~\cite{Zalc} (also see  \cite{Zalc 1}, p. $814$).
\begin{lemma}\label{lemma 1} Let $\mathcal F$ be a family of meromorphic  functions in the unit disk  $\Delta$, with the property that for every function $f\in \mathcal F,$  the zeros of $f$ are of multiplicity at least $l$ and the poles of $f$ are of multiplicity at least $k$. If $\mathcal F$ is not normal at $z_0$ in $\Delta$, then for  $-l< \alpha <k$, there exist
\begin{enumerate}
\item{ a sequence of complex numbers $z_n \rightarrow z_0$, $|z_n|<r<1$},
\item{ a sequence of functions $f_n\in \mathcal F$, }
\item{ a sequence of positive numbers $\rho_n \rightarrow 0$},
\end{enumerate}
such that $g_n(\zeta)=\rho_n^{\alpha}f_n(z_n+\rho_n\zeta) $ converges to a non-constant meromorphic function $g$ on $\C$ with $g^{\#}(\zeta)\leq g^{\#}(0)=1$. Moreover, $g$ is of order at most two. Here, $g^{\#}(z)=\frac{|g'(z)|}{1+|g(z)|^2}$ is the spherical derivative of $g$.
\end{lemma}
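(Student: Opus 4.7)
The plan is to argue by contradiction via Pang's rescaling lemma (Lemma~\ref{lemma 1}). Suppose $\mathcal F$ is not normal at some $z_0\in D$, and factor $h(z)=(z-z_0)^m H(z)$ locally, where $0\le m\le d$ is the vanishing order of $h$ at $z_0$ and $H(z_0)\ne 0$. Since the zeros of each $f\in\mathcal F$ have multiplicity at least $k+2d+2$ and we may take the pole-multiplicity bound to be $1$, the admissibility condition of Lemma~\ref{lemma 1} reads $-(k+2d+2)<\alpha<1$; I choose $\alpha=-m$, which lies in this range because $0\le m\le d<k+2d+2$. This produces $z_n\to z_0$, $\rho_n\to 0^+$, $f_n\in\mathcal F$ with
\[
g_n(\zeta):=\rho_n^{-m}f_n(z_n+\rho_n\zeta)\longrightarrow g(\zeta)
\]
spherically locally uniformly on $\mathbb C$, where $g$ is non-constant meromorphic of order at most two. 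Passing to a subsequence so that $w_n:=(z_n-z_0)/\rho_n\to w_0\in\mathbb C$ (the subcase $w_n\to\infty$ is reduced to this by a standard reparametrization $\widetilde\rho_n=|z_n-z_0|$), I compute
\[
\rho_n^{-m}h(z_n+\rho_n\zeta)=(w_n+\zeta)^m H\bigl(z_0+\rho_n(w_n+\zeta)\bigr)\longrightarrow H(z_0)(\zeta+w_0)^m=:\Phi(\zeta),
\]
a polynomial in $\zeta$ of degree exactly $m$ (a non-zero constant when $m=0$).

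I then harvest three properties of $g$. Hurwitz's theorem gives that zeros of $g$ inherit multiplicity $\ge k+2d+2$ from the $f_n$, and since $g_n^{(k)}(\zeta)=\rho_n^{k-m}f_n^{(k)}(z_n+\rho_n\zeta)\to g^{(k)}(\zeta)$, zeros of $g^{(k)}$ have multiplicity $\ge 2d+2$. For the shared-function input, let $S=\{z\in D:f(z)=h(z)\}$ be the common coincidence set; the trivial case $f\equiv h$ for some $f$ gives $\mathcal F=\{h\}$ and normality, so I may assume $S$ is discrete. Applying Hurwitz to $g_n-\rho_n^{-m}h(z_n+\rho_n\,\cdot\,)\to g-\Phi$ (noting $g\not\equiv\Phi$, since the zeros of $g$ have multiplicity exceeding $\deg\Phi$), each zero $\zeta_0$ of $g-\Phi$ produces a sequence $\zeta_n\to\zeta_0$ with $z_n+\rho_n\zeta_n\in S$; since $z_n+\rho_n\zeta_n\to z_0$ and $S$ is discrete, eventually $z_n+\rho_n\zeta_n=z_0$, forcing $\zeta_n=-w_n\to-w_0$. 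Hence $g-\Phi$ has at most one zero on $\mathbb C$.

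The main obstacle is the Nevanlinna endgame. The crucial observation is $\deg\Phi=m\le d<k$, so $\Phi^{(k)}\equiv 0$ and $(g-\Phi)^{(k)}=g^{(k)}$. From the identity $\tfrac{1}{g-\Phi}=\tfrac{(g-\Phi)^{(k)}}{g-\Phi}\cdot\tfrac{1}{g^{(k)}}$ together with the Logarithmic Derivative Lemma applied to $g-\Phi$, I obtain $m(r,1/(g-\Phi))\le m(r,1/g^{(k)})+S(r,g)$. Since $g-\Phi$ has at most one zero, $N(r,1/(g-\Phi))=O(\log r)$, so the First Fundamental Theorem yields $T(r,g)\le m(r,1/g^{(k)})+S(r,g)$. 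Combining this with the standard bound $m(r,1/g^{(k)})\le T(r,g)+k\,\overline N(r,g)-N(r,1/g^{(k)})+S(r,g)$, the multiplicity-induced inequalities $\overline N(r,1/g)\le T(r,g)/(k+2d+2)$ and $\overline N(r,1/g^{(k)})\le N(r,1/g^{(k)})/(2d+2)$, and a suitable second-main-theorem bound on $\overline N(r,g)$, the hypothesis $k\ge 2d+2$ makes the resulting linear inequality $T(r,g)\le c\,T(r,g)+S(r,g)$ have coefficient $c<1$, contradicting non-constancy of $g$. The rational subcase is dispatched by a direct counting of zeros and poles using the single-zero condition on $g-\Phi$ and the same multiplicity bounds. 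Orchestrating these Nevanlinna estimates so that $k\ge 2d+2$ precisely yields $c<1$ is the delicate part of the proof.
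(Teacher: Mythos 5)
Your write-up does not prove the statement it is attached to. The statement is Lemma \ref{lemma 1} itself, i.e.\ the Zalcman--Pang rescaling lemma; but the very first step of your argument is to \emph{invoke} that lemma (``argue by contradiction via Pang's rescaling lemma'') in order to manufacture the sequences $z_n$, $\rho_n$, $f_n$ and the limit function $g$. Everything that follows --- the local factorization $h(z)=(z-z_0)^mH(z)$, the choice $\alpha=-m$, the Hurwitz and shared-set arguments, the Nevanlinna endgame exploiting $k\ge 2d+2$ --- concerns the hypotheses of Theorem \ref{thm 3}, none of which appear in Lemma \ref{lemma 1}. So what you have sketched is a proof of the paper's main theorem (broadly parallel to Section 3 of the paper, though the paper splits into the cases $h(z_0)=0$ and $h(z_0)\neq 0$ and leans on Lemmas \ref{lemma 2} and \ref{lemma 3} rather than a direct second-main-theorem computation). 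As a proof of Lemma \ref{lemma 1} it is circular and establishes nothing.

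For the record, the paper itself offers no proof of Lemma \ref{lemma 1}: it is quoted from Zalcman's survey and the Pang--Zalcman literature. An actual proof proceeds through Marty's criterion: non-normality at $z_0$ produces $f_n\in\mathcal F$ and points $z_n^*\to z_0$ with $f_n^{\#}(z_n^*)\to\infty$; one then chooses $z_n$ to maximize a suitably weighted spherical derivative of the renormalized functions and takes $\rho_n$ to be governed by the reciprocal of that maximum, so that $g_n(\zeta)=\rho_n^{\alpha}f_n(z_n+\rho_n\zeta)$ satisfies $g_n^{\#}(0)\to 1$ and has locally uniformly bounded spherical derivative; Marty's criterion then gives a convergent subsequence with non-constant limit $g$ and $g^{\#}(\zeta)\le g^{\#}(0)=1$. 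The multiplicity hypotheses on zeros and poles are exactly what make this normalization legitimate for every $\alpha\in(-l,k)$, and the bound $g^{\#}\le 1$ yields order at most two via the Ahlfors--Shimizu form of the characteristic. None of these ingredients are present in your proposal; if your intent was to prove Theorem \ref{thm 3}, the sketch is of the right general shape, but it should not be offered as a proof of the rescaling lemma.
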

\begin{lemma}\label{lemma 2}~\cite{PYZ}
Let $f(z)$ be a transcendental meromorphic function of finite order on $\C,$ and let $p(z) \not \equiv 0$ be a polynomial. Suppose that all zeros of $f(z)$ have multiplicity at least $k+1$, then  $f^{(k)}(z) - p(z)$ has infinitely many zeros.
\end{lemma}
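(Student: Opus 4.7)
I would argue by contradiction. Suppose $\mathcal F$ is not normal at some $z_0\in D$, and let $m_0$ denote the multiplicity of the zero of $h$ at $z_0$ (with the convention $m_0=0$ when $h(z_0)\neq 0$); by hypothesis $0\leq m_0\leq d$. Applying Lemma~\ref{lemma 1} with the admissible exponent $\alpha=-m_0$ (since $0\leq m_0\leq d<k+2d+2$, so $-l<-m_0<k_{\mathrm{pole}}$), I obtain sequences $z_n\to z_0$, $f_n\in\mathcal F$, $\rho_n\to 0^+$, and a non-constant meromorphic function $g$ on $\C$ of order at most two with $g^{\#}(0)=1$ such that
\[
g_n(\zeta):=\rho_n^{-m_0}f_n(z_n+\rho_n\zeta)\longrightarrow g(\zeta)
\]
spherically locally uniformly on $\C$. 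By Hurwitz's theorem applied to $g_n$ and (since $g_n^{(k)}=\rho_n^{k-m_0}f_n^{(k)}(z_n+\rho_n\zeta)\to g^{(k)}$ off the poles of $g$) to $g_n^{(k)}$, the multiplicity bounds from $\mathcal F$ descend to $g$: every zero of $g$ has multiplicity $\geq k+2d+2$, and every zero of $g^{(k)}$ (when $g^{(k)}\not\equiv 0$) has multiplicity $\geq 2d+2$.

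Next, I would use the sharing hypothesis to pin down a small function omitted by $g$. After passing to a subsequence with $(z_n-z_0)/\rho_n\to\xi_0\in\C$, the Taylor expansion of $h$ at $z_0$ gives $\rho_n^{-m_0}h(z_n+\rho_n\zeta)\to c(\zeta+\xi_0)^{m_0}$ locally uniformly, where $c=h^{(m_0)}(z_0)/m_0!\neq 0$. In the generic scenario that some $f_0\in\mathcal F$ satisfies $f_0(z_0)\neq h(z_0)$ (resp.\ $f_0-h$ does not vanish at $z_0$ when $h(z_0)=0$), the sharing condition propagates this nonvanishing of $f-h$ to all of $\mathcal F$ on a neighbourhood of $z_0$, and Hurwitz yields that either $g(\zeta)\equiv c(\zeta+\xi_0)^{m_0}$ or $g(\zeta)\neq c(\zeta+\xi_0)^{m_0}$ everywhere on $\C$. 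The first alternative is ruled out because $c(\zeta+\xi_0)^{m_0}$ has a zero of multiplicity $m_0\leq d<k+2d+2$ at $-\xi_0$. Hence $g$ omits the polynomial target $c(\zeta+\xi_0)^{m_0}$ on $\C$.

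With $g$ now a non-constant meromorphic function on $\C$ of order at most $2$ omitting $c(\zeta+\xi_0)^{m_0}$ and satisfying the two multiplicity conditions, I split into three subcases. If $g$ is entire, then $g-c(\zeta+\xi_0)^{m_0}=e^{P(\zeta)}$ for a polynomial $P$ of degree $\leq 2$, and differentiating at a zero $\zeta_0$ of $g$ shows $g'(\zeta_0)$ vanishes only when $\zeta_0$ satisfies an algebraic equation with finitely many roots; so almost all of the infinitely many zeros of $g$ are simple, contradicting $k+2d+2\geq 2$. If $g$ is transcendental meromorphic with at least one pole, I would invoke Lemma~\ref{lemma 2} together with the small-function version of Nevanlinna's defect relation applied to $g$ at the three targets $\{0,\,c(\zeta+\xi_0)^{m_0},\,\infty\}$, combined with the multiplicity bound on zeros of $g^{(k)}$ and the logarithmic-derivative estimate $T(r,g^{(k)})\leq(k+1)T(r,g)+S(r,g)$, in order to derive a $T(r,g)$ inequality that is inconsistent precisely under $k\geq 2d+2$. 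If $g$ is rational, then $g-c(\zeta+\xi_0)^{m_0}$ is rational and zero-free, forcing the explicit form $g(\zeta)=c(\zeta+\xi_0)^{m_0}+C/Q(\zeta)$, and a Riemann--Hurwitz and degree count on $g$ and on $g^{(k)}$, using both multiplicity bounds, delivers the contradiction.

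The principal obstacles will be: first, the residual branch in which $f(z_0)=h(z_0)$ for every $f\in\mathcal F$, where the Hurwitz/omission step above collapses and one must either shift $z_0$ to a nearby non-normal point or perform a secondary rescaling tuned to the common contact order of $f-h$ at $z_0$; second, the Nevanlinna bookkeeping in the transcendental meromorphic subcase, where the constants $k+2d+2$ and $2d+2$ have to be combined under the hypothesis $k\geq 2d+2$ to produce a quantitative contradiction, which requires a small-function version of the second main theorem once $m_0\geq 1$. The interplay between the two multiplicity bounds, made possible by tracking $g$ and $g^{(k)}$ simultaneously through the Zalcman rescaling, is what I expect to be the most delicate part of the argument.
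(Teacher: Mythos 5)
Your proposal does not address the statement you were asked to prove. Lemma~\ref{lemma 2} is a value-distribution statement about a \emph{single} transcendental meromorphic function $f$ of finite order: under the hypothesis that every zero of $f$ has multiplicity at least $k+1$, the function $f^{(k)}-p$ must have infinitely many zeros. There is no family $\mathcal F$, no shared function $h$, no domain $D$, and no question of normality in this statement. Your argument, by contrast, begins ``Suppose $\mathcal F$ is not normal at some $z_0\in D$'' and proceeds through a Zalcman rescaling of a family sharing $h$ --- this is an outline of a proof of Theorem~\ref{thm 3} (the paper's main result), not of Lemma~\ref{lemma 2}. Worse, in your transcendental subcase you explicitly ``invoke Lemma~\ref{lemma 2}'' as an ingredient, so even if one tried to read your text as a proof of the lemma it would be circular.

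For the record, the paper itself offers no proof of this lemma: it is quoted from Pang--Yang--Zalcman \cite{PYZ} and used as a black box. A genuine proof lives entirely in Nevanlinna theory: one assumes $f^{(k)}-p$ has only finitely many zeros, applies a second-main-theorem or Hayman--Milloux type estimate to bound $T(r,f^{(k)})$ (and hence $T(r,f)$) by $\overline N\bigl(r,1/(f^{(k)}-p)\bigr)$ together with $\overline N(r,1/f)$ and $\overline N(r,f)$, uses the multiplicity-$\ge k+1$ hypothesis to absorb $\overline N(r,1/f)\le \frac{1}{k+1}N(r,1/f)$, and concludes $T(r,f)=O(\log r)+S(r,f)$, contradicting the transcendence of $f$. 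None of the machinery you set up (Hurwitz descent of multiplicities through $g_n$, omission of the rescaled target $c(\zeta+\xi_0)^{m_0}$, the case split on $f(z_0)=h(z_0)$) is relevant to that task.
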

\begin{lemma}\label{lemma 3}
Let $d \geq 0$ and $k \geq 2d+2$  be two integers and let $p(z) \not \equiv 0$ be a polynomial of degree at most $d.$ Let $f(z)$ is a non constant rational function and multiplicity of all  zeros of $f(z)$ is at least $k+2d+2$, and multiplicity of all zeros of $f^{(k)}(z)$ is at least $2d+2$. Then $f^{(k)}(z) - p(z)$ has at least two distict zeros and  $f^{(k)}(z) - p(z) \not \equiv 0.$
\end{lemma}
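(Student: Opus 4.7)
The plan is a proof by contradiction for both assertions. For the claim $f^{(k)} - p \not\equiv 0$: if $f^{(k)} \equiv p$, then $f$ can have no poles (else $f^{(k)}$ would), so $f$ is a polynomial, and integrating the degree-$\leq d$ polynomial $p$ forces $\deg f \leq k + d$. But every zero of $f$ has multiplicity $\geq k + 2d + 2 > k + d$, so $f$ has no zeros and must be a nonzero constant, forcing $p \equiv 0$, a contradiction.

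For the two-distinct-zeros assertion, suppose $f^{(k)} - p$ has at most one distinct zero $z_0$, of multiplicity $l \geq 0$. Let $\beta_1, \dots, \beta_t$ denote the distinct poles of $f$ with multiplicities $m_1, \dots, m_t$. Since $p$ is a polynomial, the finite poles of $f^{(k)} - p$ coincide with those of $f^{(k)}$, so
\begin{equation*}
f^{(k)}(z) - p(z) = \frac{A(z - z_0)^l}{\prod_{i=1}^t (z - \beta_i)^{m_i + k}}, \qquad A \in \C \setminus \{0\}.
\end{equation*}
Differentiating $(d+1)$ times and using that $p^{(d+1)} \equiv 0$ (since $\deg p \leq d$) yields $f^{(k+d+1)} = (f^{(k)} - p)^{(d+1)}$. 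The central structural fact is that every zero of $f^{(k)}$ has multiplicity $\mu \geq 2d + 2$ and is therefore a zero of $f^{(k+d+1)}$ of multiplicity at least $\mu - d - 1 \geq d + 1$.

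I would first dispose of the polynomial subcase $t = 0$: $f^{(k)} - p$ reduces to $A(z - z_0)^{\deg f - k}$, so $f^{(k+d+1)}$ is a nonzero constant multiple of $(z-z_0)^{\deg f - k - d - 1}$, and the key observation above forces every zero of $f^{(k)}$ to coincide with $z_0$. Since $\deg f^{(k)} = \deg f - k \geq 2d+2 > 0$, $f^{(k)}$ has at least one zero, which must be $z_0$; so $p(z_0) = 0$. Writing $p(z) = (z-z_0)^e q(z)$ with $1\leq e \leq d$ and $q(z_0) \neq 0$, and factoring $f^{(k)} = p + A(z-z_0)^{\deg f - k}$ with $\deg f - k > e$, shows that the multiplicity of $z_0$ in $f^{(k)}$ is exactly $e \leq d$, contradicting $\geq 2d+2$.

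The rational subcase $t \geq 1$ is the main obstacle. Here I would count the finite zeros (with multiplicity) of $f^{(k+d+1)}$ in two ways. From the explicit formula, the numerator of $f^{(k+d+1)}$ in lowest terms has degree of order $l + (d+1)(t-1)$; on the other hand, using that each zero of $f^{(k)}$ contributes to $f^{(k+d+1)}$ with multiplicity at least $d+1$, one gets a lower bound of order $(d+1) r + \bigl( N - (k+2d+2)s \bigr)$, where $r$ and $s$ denote the numbers of distinct zeros of $f^{(k)}$ and of $f$ respectively and $N = \sum n_j$. Combining these with the hypotheses $n_j \geq k+2d+2$, $u_l \geq 2d+2$, and $k \geq 2d+2$ should force a numerical contradiction; I expect the delicate step to be accurately tracking the behaviour of $f^{(k)} - p$ at infinity (specifically, possible cancellation between the leading terms of $f^{(k)}$ and $p$), since this governs the precise degree of the numerator in the explicit formula, and a subcase where $z_0$ itself coincides with a zero of $f^{(k)}$ has to be ruled out separately by a direct factorisation analogous to the polynomial subcase.
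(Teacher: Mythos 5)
Your strategy is the same as the paper's: put $f^{(k)}-p$ in one-zero form, differentiate $d+1$ times so that $p$ disappears, and play the zeros that $f^{(k+d+1)}$ inherits from the high-multiplicity zeros of $f^{(k)}$ against the degree of the numerator of the differentiated one-zero form. The parts you actually complete are correct: the argument for $f^{(k)}-p\not\equiv 0$ works, and your polynomial subcase, via the factorisation $p=(z-z_0)^e q$ pinning the multiplicity of $z_0$ in $f^{(k)}$ at exactly $e\le d$, is cleaner than the paper's Case 1.1 (which tacitly assumes $\deg p=d$ and is somewhat garbled).

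The genuine gap is the non-polynomial rational case, which is where every hypothesis ($k\ge 2d+2$, zero multiplicities $\ge k+2d+2$ for $f$ and $\ge 2d+2$ for $f^{(k)}$) is actually consumed, and which you leave as a plan rather than a proof. You assert a lower bound ``of order $(d+1)r+\bigl(N-(k+2d+2)s\bigr)$'' and an upper bound ``of order $l+(d+1)(t-1)$'' without deriving either, and you never combine them into a contradiction; you explicitly defer both the behaviour at infinity and the subcase where $z_0$ is a zero of $f^{(k)}$. These are not loose ends but the substance of the lemma. The paper's proof spends essentially all of its length exactly here: writing $M=\sum m_i\ge(2d+2)s$ and $N=\sum n_j\ge(k+1)t\ge(2d+3)t$ for the zeros and poles of $f^{(k)}$, it splits into $d\ge l$; $d<l$ with $l<d+N$; and $d<l$ with $l\ge d+N$ (precisely the trichotomy at infinity you postpone), rules out $z_0=\alpha_i$ by evaluating $f^{(k+d)}-p^{(d)}$ at $z_0$, and in each branch extracts an inequality such as $M-(d+1)s\le(d+1)t$, which together with $s\le M/(2d+2)$ and $t\le N/(2d+3)$ and the relevant comparison of $M$ with $N$ gives $M\le\frac{4d+5}{4d+6}M<M$. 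Until you carry out an analogous computation and verify that the constants really do close, the two-distinct-zeros assertion remains unproved.
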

\begin{proof}
{Case 1.} Suppose that  $f^{(k)}(z)-p(z)$ has exactly one zero at $z_0$ with multiplicity $l.$ \\

{Case 1.1.} Suppose that $f$ is a non constant polynomial.\\

If  $f^{(k)}(z) - p(z) \equiv 0$, then $f(z)$ is a polynomial of degree at most $k+d,$  which contradicts with the fact that multiplicity of all zeros of $f(z)$  is at least $k+2d+2$, Hence  $f^{(k)}(z) - p(z) \not \equiv 0.$ Let

\begin{equation}\label{Ann eq b2}
f^{(k)}(z)-p(z)=K(z-z_0)^l
\end{equation}
where $K$ is a non-zero constant, $l$ is a positive integer. Because all zeros of $f^{(k)}(z)$ are of multiplicity at least $2d+2,$ we obtain $ l \geq 2d+2,$ then \\
\begin{equation}
f^{(k+d)}(z)-C = Kl(z-z_0)^{l-d},
\end{equation}
and $f^{(k+d+1)}(z) = Kl(l-d)(z-z_0)^{l-d-1}$.  This implies that $f^{(k+d-1)}$ has exactly one zero $z_0.$ So $f^{(k+d)}$ has only the same zero $z_0$ too. Hence $ f^{(k+d)}(z_0) = 0$, which contradicts with  $f^{(k+d)}(z_0) - C \neq 0.$ \\

 {Case 1.2.} Suppose that $f$ is a non-polynomial rational function.\\
Since $g(z)$ is  a rational function and not a polynomial, then obviously $f^{(k)}(z) - p(z) \not \equiv 0.$ Let\\
 \begin{equation}\label{eq1}
f^{(k)}(z) =\frac{A (z-\alpha_1)^{m_1}(z-\alpha_2)^{m_2}\ldots(z-\alpha_s)^{m_s}}{(z-\beta_1)^{n_1}(z-\beta_2)^{n_2}\ldots(z-\beta_t)^{n_t}},
\end{equation}
where $A$ is a non zero constant. Since all zeros of $f^{(k)}(z)$ are of multiplicity at least $2d+2, $ we find $m_i  \geq   2d+2 (i=1,2,\ldots ,s) , n_j \geq  k+1(j= 1,2, \ldots ,t).$\\
Let us define
\begin{equation}\label{eq1*}
\sum_{i=1}^s m_i = M \geq (2d+2)s\ \text{and}\ \sum_{j=1}^t n_j = N \geq (k+1)t \geq (2d+3)t.
\end{equation}
Differentiating both sides of (\ref{eq1}) step by step, we obtain
\begin{equation}\label{eq2}
f^{(k+d+1)}(z) =\frac{ (z-\alpha_1)^{m_1-d-1}(z-\alpha_2)^{m_2-d-1}\ldots(z-\alpha_s)^{m_s-d-1}g_1(z)}{(z-\beta_1)^{n_1+d+1}(z-\beta_2)^{n_2+d+1}\ldots(z-\beta_t){n_t+d+1}},
\end{equation}
where $g_1(z) =(M-N)\ldots(M-N-d)z^{(s+t-1)(d+1)} + a_{t-1}z^{t-1} + \ldots+ a_0 (a_{t-1}, \ldots a_0$ are constants).\\

Since $f^{(k)}(z) - p(z)$ has exactly one zero at $z_0.$ From (\ref{eq1}), we get
\begin{equation}\label{eq3}
f^{(k)}(z) = p(z) + \frac{B(z-z_0)^l}{(z-\beta_1)^{n_1}(z-\beta_2)^{n_2}\ldots(z-\beta_t)^{n_t}}
\end{equation}
Now we consider the following cases:\\
Case {1.2.1.} When $d\geq l$.
Differentiating both sides of (\ref{eq3}), $(d+1)$- times, we get
\begin{equation}\label{eq4}
f^{(k+d+1)}(z) = \frac{g_2(z)}{(z-\beta_1)^{n_1+d+1}(z-\beta_2)^{n_2+d+1}\ldots(z-\beta_t)^{n_t+d+1}}
\end{equation}
where $g_2(z) = (l - N) \ldots(l-N-d)z^{(d+1)t-(d-l+1)} +\ldots b_1 z+ b_0$.\\

From (\ref{eq1}) and(\ref{eq3}), we get $d+N = M$. This implies $M \geq N$.
Now from (\ref{eq2}) and (\ref{eq4}), we obtain\\
$M-(d+1)s \leq (d+1)t-(d-l+1) <(d+1)t$,\\
 
It follows that\\

$M<(d+1)(s+t)\leq(d+1)\left(\frac{M}{2d+2}+\frac{M}{2d+3}\right)< M.$\\

Which is a contradiction.\\

Case {1.2.2.} When $d<l.$\\

Differentiating both sides of (\ref{eq3}), $(d+1)$- times, we get
\begin{equation}\label{eq5}
f^{(k+d+1)}(z) = \frac{(z-z_0)^{l-d-1}g_3(z)}{(z-\beta_1)^{n_1+d+1}(z-\beta_2)^{n_2+d+1}\ldots(z-\beta_t)^{n_t+d+1}},
\end{equation}
where $g_3(z) = (l - N) \ldots(l-N-d)z^{(d+1)t} +\ldots c_1 z+ c_0$.\\

Differentiating (\ref{eq3}) $d$-times, we get $z_0$ is a zero of $f^{(k+d)}(z) - p^{(d)}(z)$, as $ p^{(d)}(z) \neq 0,$ then $z_0 \neq \alpha_i(i = 1,2,\ldots , s).$\\

\underline{Subcase {1.2.2.1}}   When $l<d+N$.\\

From (\ref{eq1}) and (\ref{eq3}), we get $M\geq N$. Since $z_0 \neq \alpha_i$ for any $ i\in \{1,2,\ldots, s\}$, therefore  from (\ref{eq2}) and (\ref{eq5}), we get\\ 

$M-(d+1)s \leq (d+1)t,$\\

It follows that\\

$M \leq \frac{4d+5}{4d+6}M < M,$
Which is a contradiction.\\

\underline{Subcase {1.2.2.2}} When $l \geq d+N.$\\

 If $M > N,$ then similar to the proof of Subcase 1.2.2.1, we get a contradiction. Thus $M\leq N.$ Since $z_0 \neq \alpha_i$ for any $ i\in \{1,2,\ldots, s\}$, then from (\ref{eq2}) and (\ref{eq5}),  we get\\

$l-d-1 \leq (d+1)(s+t-1),$\\

It follows that\\

$N\leq (d+1)\left(\frac{N}{2d+2}+\frac{N}{2d+3}\right) < N$, which is a contradiction.\\

{Case 2.} If  $f^{(k)}(z) - p(z)$ has no zero.  Similar to case 1, we obtain $f^{(k)}(z) - p(z)\not \equiv 0.$\\
Now put $l=0$ in (\ref{Ann eq b2}) and (\ref{eq3}), and  similar discussion to case 1, we get a contradiction.\\

Hence by case 1 and case 2, $f^{(k)}(z) - p(z)$ has at least two distinct zeros and $f^{(k)}(z) - p(z)\not \equiv 0. $
\end{proof}
\section{Proof of Main Result} 
\begin{proof}[Proof of Theorem \ref{thm 3}]  Since normality is a local property, it is enough to show that $\mathcal F$ is normal at each $z_0 \in D.$  we assume that
${D}=\Delta$. For each $z_0 \in D$, either $h(z_0) = 0$ or $h(z_0) \neq 0.$  Without loss of
generality, we may assume that $z_0=0$.\\

Case {1.} We first prove that $\mathcal F$ is normal at points $z$,  where $h(z) = 0$. By making standard normalization, we suppose that 
\begin{equation}\notag
h(z ) = z^l +a_{l+1}z^m+1+\ldots = z^lb(z),
\end{equation}  
where $l \geq 1, \  b(0) = 1,$ and $h(z) \neq 0$ when $ 0<|z|<1$. Let
\begin{equation}\notag
\mathcal F_1 := \{F_n: F_n(z) = \frac{f_n(z)}{z^l}, f\in \mathcal F\}.
\end{equation}
We shall prove that $\mathcal F_1$ is normal at $0.$\\
Suppose that $\mathcal F_1$ is not normal at $0,$ then by lemma \ref{lemma 1}, there exist $z_j \in \Delta$ tending to $0$, functions $F_j \in \mathcal F_1,$ positive numbers $\rho_j$ tending to $0$, such that 
\begin{equation}\label{eq 7}
g_n(\xi)= \rho_n^{-k}F_n(z_n + \rho_n \xi) \rightarrow g(\xi)
\end{equation}
locally uniformly on $\C$ with respect to the sherical  metric, where $g(\xi)$ is a non- constant meromorphic function on $\C$, whose order is at most 2. We distinguish two cases.\\

Case {1.1.} There exist a subsequence of $\frac{z_n}{\rho_n},$ we still denote the subsequence by $\frac{z_n}{\rho_n}$, such that $\frac{z_n}{\rho_n} \rightarrow \alpha$, where $\alpha$ is a finite complex number. Then,\\
\begin{equation}\notag
G_n(\xi) = \frac{f_n(\rho_n \xi)}{\rho_n^{k+l}}= (\rho_n\xi)^lF_n(z_n+\rho_n(\xi-\frac{z_n}{\rho_n})) \rightarrow \xi^lg(\xi-\alpha) = G(\xi)
\end{equation}  
spherically locally uniformly in $\C$. Then
\begin{equation}\label{eq 6}
G_n^{(k)}(\xi) -\frac{h(\rho_n\xi)}{\rho_n^l} = \frac{f_n^{(k)}(\rho_n\xi)- h(\rho_n\xi)}{\rho_n^l} \rightarrow G^{(k)}(\xi) -\xi^l
\end{equation}
spherically locally uniformly in $\C$.\\

Since for all $f\in \mathcal F$ multiplicity of all zeros of $f$ is at least $k+2d+2$, and multiplicity of all zeros of $f^{(k)}$ is at least $2d+2,$ which implies multiplicity of all zeros of $G$ is at least $k+2d+2$, and  by Hurwitz's theorem all zeros of $G^{(k)}$ is at least $2d+2,$  then by lemma (\ref{lemma 1}) and (\ref{lemma 2}), $ G^{(k)}(\xi) -\xi^l$ has at least two distinct zeros.\\

We claim that $ G^{(k)}(\xi) -\xi^l$ has just a unique zero.\\

Suppose that $\xi_0$ and $\xi_0^*$ are two distinct zeros of $ G^{(k)}(\xi) -\xi^l$, and choose $\delta (>0)$ small enough such that $D(\xi_0,\delta) \cap D(\xi_0^*,\delta) = \phi,$ where $D(\xi_0,\delta) = \{\xi | |\xi-\xi_0| < \delta\}$ and $D(\xi_0^*,\delta) =\{\xi | |\xi-\xi_0^*| < \delta\}$.\\

From (\ref{eq 6}), and by Hurwitz's theorem, there exists points  $\xi_n \in D(\xi_0,\delta), \xi_n^* \in D(\xi_0^*,\delta)$ such that for sufficiently large $n$
\begin{equation}\notag
f_n^{(k)}(\rho_n\xi_n)- h(\rho_n\xi_n) = 0,
\end{equation}
\begin{equation}\notag
f_n^{(k)}(\rho_n\xi_n^*)- h(\rho_n\xi_n^*) = 0.
\end{equation}
By the assumption that for each pair $f, g \in \mathcal F,$ $f^{(k)} $ and $g^{(k)}$ share $h$ in $D$, we know that for any integer $m$\\
\begin{equation}\notag
f_m^{(k)}(\rho_n\xi_n)- h(\rho_n\xi_n) = 0,
\end{equation}
\begin{equation}\notag
f_m^{(k)}(\rho_n\xi_n)- h(\rho_n\xi_n^*) = 0.
\end{equation}
we fix $m$ and note that $\rho_n\xi_n \rightarrow 0 , $ $\rho_n\xi_n^*\rightarrow 0,$ as $n\rightarrow \infty.$ From this we obtain 
\begin{equation}\notag
f_m^{(k)}(0)- h(0) = 0.
\end{equation}
Since the zeros of  $f_m^{(k)}(z)- h(z) = 0$ has no accumulation point, when $n$ is sufficiently large enough, we have
\begin{equation}\notag
\rho_n\xi_n = \rho_n\xi_n^* = 0.
\end{equation}

Hence\\

\begin{equation}\notag
\xi_n = \xi_n^* = 0.
\end{equation}
which contradicts with the fact that $\xi_n \in D(\xi_0,\delta), \xi_n^*\in D(\xi_0^* ,\delta) $ and $D(\xi_0,\delta) \cap D(\xi_0^*,\delta) = \phi$.\\

Case {1.2.}  There exist a subsequence of $\frac{z_n}{\rho_n},$ we still denote the subsequence by $\frac{z_n}{\rho_n}$, such that $\frac{z_n}{\rho_n} \rightarrow \infty$. By simple calculation we obtain,\\

\begin{align}\label{eq 1*}
F_n^{(k)}(z) & =\frac{f_n^{(k)}(z)}{z^l} - \sum_{m=1}^k C_k^m (z^l)^m\frac{F_n^{(k-m)}(z)}{z^l}\notag\\
& =\frac{f_n^{(k)}(z)}{z^l} - \sum_{m=1}^k C_m\frac{F_n^{(k-m)}(z)}{z^m},
\end{align}
where
\begin{equation}\notag
C_m = C_k^m l(l-1)\ldots (l-m+1),\ \text {when}\ m\leq l, \ \text{and}\ C_m = 0,\ \text{when}\ m>l.
\end{equation}
From (\ref{eq 1*}) and the identity $\rho_n^mg_n^{(k-m)} = F_n^{(k-m)}(z_n + \rho_n\xi)$, we get\\
\begin{align}\notag
g_n^{(k)}(\xi) & =  F_n^{(k)}(z_n + \rho_n\xi)\notag\\
& = \frac{f_n^{(k)}(z_n+\rho_n\xi)}{(z_n+\rho_n\xi)^l} - \sum_{m=1}^k C_m F_n^{(k-m)}(z)\frac{1}{(z_n+\rho_n\xi)^m}\notag \\
& =  \frac{f_n^{(k)}(z_n+\rho_n\xi)}{(z_n+\rho_n\xi)^l} - \sum_{m=1}^k C_m g_n^{(k-m)}(z)\frac{1}{(\frac{z_n}{\rho_n}+\xi)^m}.\\ \notag
\end{align}
Hence,\\
\begin{equation}\notag
\frac{f_n^{(k)}(z_n+\rho_n\xi)}{h(z_n+\rho_n\xi)} =\left( g_n^{(k)}(\xi) +  \sum_{m=1}^k g_n^{(k-m)}(\xi)\frac{C_m}{(\frac{z_n}{\rho_n}+\xi)^m}\right)\frac{1}{b(z_n+\rho_n\xi)}.
\end{equation}
 Thus, we have 

\begin{equation}\label{eq 8}
\frac{f_n^{(k)}(z_n+\rho_n\xi)-h(z_n+\rho_n\xi)}{h(z_n+\rho_n\xi)} \rightarrow g^{(k)}(\xi) -1,
\end{equation}

spherically uniformly on compact subset of $\C$ disjoint from the poles of $g.$\\

Since for all $f \in \mathcal F $, all zeros of  $f(z)$ have multiplicity at least $k+2d+2$, hence all zeros of  $g(\xi)$ have multiplicity at least $k+2d+2$. Noting that all zeros of $f^{(k)}(z)$  have multiplicity at least $2d+2$. By Hurwitz's theorem, all zeros of  $g^{(k)}(\xi)$ have multiplicity at least $2d+2.$ Thus   by lemma \ref{lemma 1} and \ref{lemma 2},  $ g^{(k)}(\xi) -1$ has at least two distinct zeros.\\

We claim that $ g^{(k)}(\xi) - 1$ has just a unique zero.\\

Suppose that $\xi_1$ and $\xi_1^*$ are two distinct zeros of $ g^{(k)}(\xi) - 1$, and choose $\delta (>0)$ small enough such that $D(\xi_1,\delta) \cap D(\xi_1^*,\delta) = \phi,$ where $D(\xi_1,\delta) = \{\xi | |\xi-\xi_1| < \delta\}$ and $D(\xi_1^*,\delta) =\{\xi | |\xi-\xi_1^*| < \delta\}$.\\

From (\ref{eq 8}), and by Hurwitz's theorem, there exists points  $\hat{\xi_n} \in D(\xi_1,\delta), \hat{\xi_n^*} \in D(\xi_1^*,\delta)$ such that for sufficiently large $n$
\begin{equation}\notag
f_n^{(k)}(z_n + \rho_n\hat{\xi_n})- h(z_n + \rho_n\hat{\xi_n}) = 0.
\end{equation}
\begin{equation}\notag
f_n^{(k)}(z_n + \rho_n\hat{\xi_n^*})- h(z_n + \rho_n\hat{\xi_n^*}) = 0.
\end{equation}

Similar to the proof of case 1, we get a contradiction. Hence $\mathcal F_1$ is normal at $0.$ It remains to prove that $\mathcal F$ is normal at 0.\\  

Since $\mathcal F_1$ is normal at $0$, then there exist $r >0$ and a subsequence  $F_{n_k}$ of $F_n$ such that $F_{n_k}$ converges sherically uniformally to a meromorphic function $F(z)$ or $\infty.$ in $\Delta_r$\\

Now we consider two cases:\\
case {i.} When $f_{n_k}(0) \neq 0$, for $k$ large enough. Then $F(0) = 0,$ then there exist $0<\delta<r $ such that $|F(z)| \geq1$ in $\Delta(0,\delta)$. Thus $|F_{n_k}|>\frac{1}{2}$  in $\Delta(0,\delta)$, for sufficiently large $k.$  Hence  $f_{n_k}$ is holomorphic in $\Delta(0,\delta.)$ Therefore ,\\
\begin{equation}\notag
\frac{1}{|f_{n_k}(z)|} = \frac{1}{|F_{n_k}(z)|}\frac{1}{z^l} \leq \frac{2^{l+1}}{\delta^m}, \ \text{for all} \  z \in \Delta(0, \frac{\delta}{2}).\\
\end{equation}

By the Maximum principle and Montel's theorem, $\mathcal F$ is normal at $0,$ and thus $\mathcal F$ is normal in $D.$\\

Case {ii.} When $f_{n_k}(0) = 0$, for $k$ large enough. Since the multiplicity of all zeros of $f\in \mathcal F$ is at least $k+2d+2,$ then $F(0) = 0. $   Hence, there exist $0<\rho<r$ such that $F(z)$ is holomorphic in $\Delta_\rho$. Hence $F_{n_k}$ converges  spherically locally uniformly to a holomorphic  function $F(z)$ in $\Delta_\rho$, hence, $f_{n_k}$ converges spherically locally uniformly to a holomorphic function $z^lF(z) $ in $\Delta_\rho.$ Hence $\mathcal F$ is normal at $0$, and thus $\mathcal F$ is normal in $D.$\\
  
Case {2.} Now we  prove that $\mathcal F$ is normal at points $z$,  where $h(z) \neq 0.$\\
Suppose that $\mathcal F$ is not normal at $z_0,$ then by lemma \ref{lemma 1}, there exist $z_j \in \Delta$ tending to $0$, functions $f_n\in \mathcal F,$ positive numbers $\rho_j$ tending to $z_0$, such that 
\begin{equation}\label{eq 7}
g_n(\xi)= \rho_n^{-k}f_n(z_n + \rho_n \xi) \rightarrow g(\xi)
\end{equation}
locally uniformly on $\C$ with respect to the sherical  metric, where $g(\xi)$ is a non- constant meromorphic function on $\C$, the multiplicity of all zeros of $g$ is at least $k+2d+2$, and multiplicity of all zeros of $g^{(k)}$ is at least $2d+2.$\\

 Hence by lemma \ref{lemma 2} and lemma \ref{lemma 3},  $g^{(k)}(\xi) - h(z_0)$ has at least two distinct zeros, and $g^{(k)}(\xi) - h(z_0)\not \equiv 0.$\\
 
Similar to the proof of case 1, we get a contradiction. Hence $\mathcal F$ is normal at $z_0.$\\
Since $z_0 $ is arbitrary, thus $\mathcal F$  is normal in $D.$
\end{proof}

\end{document}